\newtheorem{thm}{Theorem}[section]
\newtheorem{cor}[thm]{Corollary}
\newtheorem{question}[thm]{Question}
\newtheorem{definition}[thm]{Definition}
\newtheorem{remark}[thm]{Remark}
\theoremstyle{definition}
\title[Translation-like actions yield regular maps]{Translation-like actions yield regular maps}
\author{Yongle Jiang}
\address{Department of Mathematics, SUNY at Buffalo, USA, 14260--2900} 
\email{yongleji@buffalo.edu}
\begin{document}
\date{\today}
\begin{abstract}
For finitely generated groups $H$ and $G$, we observe that $H$ admits a translation-like action on $G$ implies there is a regular map, which was introduced in Benjamini, Schramm and Tim\'{a}r's joint paper, from $H$ to $G$. Combining with several known obstructions to the existence of regular maps, we have various applications. For example, we show that the Baumslag-Solitar groups do not admit translation-like actions on the classical lamplighter group. 
\end{abstract}

\maketitle
\section{Introduction}

The concept of translation-like actions was introduced by Whyte to solve a geometric version of the von Neumann conjecture \cite{whyte}. It serves as a geometric analogy of subgroup containment for finitely generated groups: if $H$ is a subgroup of a finitely generated group $G$, then the natural right action of $H$ on $G$ is a translation-like action. Later on, answering a question of Whyte, Seward solved a geometric version of Burnside's problem which is formulated using translation-like actions \cite{seward}. Recently, translation-like actions play an important role in studying the question which finitely generated groups admit a weakly aperiodic shift of finite type \cite{jeandel}. Similar approaches (using notions from geometric group theory) to this question also appeared in \cite{cohen_adv., cp}. 

Despite the above success of application of this concept in various problems, it seems to us that not too much is known on whether a group $H$ admits a translation-like action on another finitely generated group $G$ assuming $H$ is not a subgroup of $G$. For known examples, see \cite{cohen, jeandel, seward, whyte}. The purpose of this note is to observe that translation-like actions yield regular maps, which was introduced in \cite{separation}. 

\begin{thm}[Main theorem]\label{main theorem}
Let $G$ and $H$ be finitely generated groups. If $H$ admits a translation-like action on $G$, then $H\rightarrow_{reg} G$.
\end{thm}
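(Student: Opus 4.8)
The plan is to use the orbit map of the translation-like action, based at the identity, as the desired regular map. Fix finite generating sets for $H$ and $G$ together with the associated word metrics $d_H, d_G$, and recall the two ingredients of the statement. A translation-like action of $H$ on $G$ is a free right action $*$ of $H$ on the vertex set of the Cayley graph of $G$ such that each $h\in H$ displaces points a uniformly bounded $G$-distance, i.e. $\sup_{g\in G} d_G(g, g*h)<\infty$; and a regular map $H\rightarrow_{reg} G$ is a map between the Cayley graphs that is Lipschitz (adjacent vertices have images at uniformly bounded distance) and uniformly finite-to-one (point-preimages have uniformly bounded size). With these in hand, I would simply take $\phi\colon H\to G$ defined by $\phi(h)=e_G*h$, the image of $e_G$ under the action of $h$, and verify that it satisfies both conditions.

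First I would check that $\phi$ is injective, which yields the finite-to-one condition with the optimal constant $1$. If $\phi(h)=\phi(h')$, then $e_G*h=e_G*h'$, and acting on the right by $h^{-1}$ gives $e_G = e_G*(h'h^{-1})$; freeness of the action at the point $e_G$ then forces $h'h^{-1}=e_H$, so $h=h'$. Note this uses only freeness, and nothing about the metric.

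Next — and this is the only place where the quantitative hypothesis enters — I would check that $\phi$ is Lipschitz. If $h$ and $h'$ are adjacent in the Cayley graph of $H$, then $h'=hs$ for some generator $s$, and the right-action axiom gives the equivariance $\phi(h')=e_G*(hs)=(e_G*h)*s=\phi(h)*s$. The defining property of a translation-like action bounds $d_G(g,g*s)$ by a constant $C_s$ independent of $g$, so $d_G(\phi(h),\phi(h'))\le C_s$. Setting $C=\max_s C_s$ over the finite set of generators of $H$ (and their inverses) produces a single Lipschitz constant valid for every edge of the Cayley graph of $H$.

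Combining the two steps, $\phi$ is an injective, Lipschitz map between the two Cayley graphs, hence a regular map, which establishes $H\rightarrow_{reg} G$. The argument is essentially formal, and I expect no genuine obstacle beyond bookkeeping: one must keep the left/right conventions straight so that the orbit map is equivariant in the direction that converts right-multiplication by a generator of $H$ into the $*$-action of that generator. The one conceptual point worth flagging is that the uniform Lipschitz constant $C$ is assembled from the bounded-displacement constants of the \emph{finitely many} generators of $H$ — the defining property of the action gives a bound for each single element, and finite generation of $H$ (not merely of $G$) is what upgrades this to a uniform bound over all edges.
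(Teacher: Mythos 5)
Your proposal is correct and takes essentially the same route as the paper: the paper's map $f(h)=c(h^{-1},x)^{-1}=x^{-1}(h^{-1}*x)$ is precisely your orbit map, with the inverse $h\mapsto h^{-1}$ performing the conversion between the paper's left-action convention and your right-action one (the left translation by $x^{-1}$ being an isometry, hence immaterial). Both arguments get the preimage condition from freeness alone and the Lipschitz condition by chaining the bounded displacement constants of the finitely many generators of $H$, so the difference is purely notational (cocycle identities versus orbit-map equivariance).
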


It is well-known that there are several ways to rule out existence of regular maps between spaces: asymptotic dimension, Dirichlet harmonic functions, growth and separation (see \cite[last paragraph in section 1]{separation}). By the above theorem, we also get obstructions to admitting translation-like actions for a pair of groups. We discuss these in Section \ref{applications}. 

\section{Definitions and proof of the main theorem}
 
First, we recall the general definition of translation-like actions following Seward \cite[Definition 1.1]{seward}, but here we use left actions. Note that the original definition is due to Whyte \cite[Definition 6.1]{whyte}.

\begin{definition}[Translation-like actions]\label{def of translation-like actions}  
Let $H$ be a group and let $(X, d)$ be a metric space. A left action $*$ of $H$ on $X$ is \emph{translation-like} if it satisfies the following two conditions:
\begin{itemize}
\item[(i)] The action is free (i.e. $h* x=x$ implies $h=1_H$).
\item[(ii)] For every $h\in H$ the set $\{d(x, h* x)|~x\in X\}$ is bounded.
\end{itemize}
\end{definition} 

Next, we recall the definition of regular maps following \cite{separation}.

\begin{definition}[Regular maps]\label{def of regular maps}
Let $(Z, d_Z), (X, d_X)$ be metric spaces, and let $\kappa<\infty$. A (not necessarily continuous) map $f: Z\to X$ is \emph{$\kappa$-regular} if the following two conditions are satisfied.
\begin{itemize}
\item[(i)] $d_X(f(z_0), f(z_1))\leq \kappa(1+d_Z(z_0, z_1))$ holds for every $z_0, z_1\in Z$;
\item[(ii)] For every open ball $B=B(x_0, 1)$ with radius 1 in $X$, the inverse image $f^{-1}(B)$ can be covered by $\kappa$ open balls of radius 1 in $Z$.
\end{itemize} 
A \emph{regular} map is a map which is $\kappa$-regular for some finite $\kappa$. Write $Z\rightarrow_{reg} X$ if there is a regular map from $Z$ to $X$. 
\end{definition} 
 
Note that as mentioned in \cite[p. 5]{separation}, if there is a quasi-isometry between bounded degree graphs $Z$ and $X$, then there is a regular map from $Z$ to $X$ and also the other direction. Hence being quasi-isometric implies $Z\rightarrow_{reg}X$ and $X\rightarrow_{reg}Z$, but the converse does not hold. 

From now on, we fix a finitely generated group $H$ and take $(X, d)$ to be a finitely generated group $G$ using a left-invariant word length metric $d$ associated to some finite symmetric generating set $T$, i.e. $d_G(x, y):=\ell_G(x^{-1}y)$ for all $x, y\in G$, where $\ell_G$ is the word length associated to $T$. 

If $H\overset{*}{\curvearrowright} G$ is a translation-like action, then we can define a map $L: H\times G\to G$ by setting $L(h, x)=x^{-1}(h*x)$ for all $x\in G, h\in H$.
 
Note that $\overset{*}{\curvearrowright}$ is an action implies that $L(h_1, x)L(h_2, h_1*x)=L(h_2h_1, x)$ for all $h_1, h_2\in H$ and $x\in G$.

Write $c(g, x):=L(g, x)^{-1}$, then $c: H\times G\to G$ is a cocycle in the usual sense, i.e. $c(h_1h_2, x)=c(h_1, h_2*x)c(h_2, x)$ for all $h_1, h_2\in H, x\in G$. And the two conditions in the definition of translation-like action are just the following.

\begin{enumerate}
\item For all $x\in G$, the map $H\ni h\mapsto c(h, x)\in G$ is 1-1.
\label{itm:1}
\item For all $h\in H$, the set $\{c(h, x): x\in G\}$ is bounded, i.e. $\sup_{x\in G}\ell_G(c(h, x))=:\lambda_h<\infty$.\label{itm:2}
\end{enumerate}
 
We are ready to prove our main theorem.

\begin{proof}[Proof of Theorem \ref{main theorem}]
Fix any $x\in G$, define $f: H\to G$ by $f(h):=c(h^{-1}, x)^{-1}$. We claim this map is a regular map.

Fix a symmetric generating set $S$ for $H$ and define $\kappa:=\max_{s\in S}\lambda_s+\# T$, where $\lambda_h:=\sup_{x\in G}\ell_G(c(h, x))$ for all $h\in H$.

Note that the second condition in Definition \ref{def of regular maps} is clear since $h\mapsto c(h, x)$ is 1-1. We are left to check the first condition holds.
\begin{eqnarray*}
&&d_G(f(h_1), f(h_2))\\
&=&d_G(c(h_1^{-1}, x)^{-1}, c(h_2^{-1}, x)^{-1})\\
&=&\ell_G(c(h_1^{-1}, x)c(h_2^{-1}, x)^{-1})\\
&=&\ell_G(c(h_1^{-1}, x)c(h_2, h_2^{-1}*x))\\
&=&\ell_G(c(h_1^{-1}h_2, h_2^{-1}*x))\\
&=&\ell_G(c(s_1\cdots s_k, h_2^{-1}*x)),\mbox{~write $h_1^{-1}h_2=s_1\cdots s_k$, where $k=\ell_H(h_1^{-1}h_2), s_i\in S.$}\\
&=&\ell_G(c(s_1, s_2\cdots s_kh_2^{-1}*x)\cdots c(s_k, h_2^{-1}*x))\\
&\leq &\sum_{i=1}^{k}\ell_G(c(s_i, s_{i+1}\cdots s_kh_2^{-1}*x))\\
&\leq &\kappa \ell_H(h_1^{-1}h_2)=\kappa d_H(h_1, h_2).
\end{eqnarray*}
\end{proof}
\begin{remark}
Similar calculation was already used in \cite{kra, lixin}. In fact, there is a connection between translation-like actions and continuous orbit equivalence theory, which may be worth exploring further.
\end{remark}
\section{Applications}\label{applications}
As mentioned in the introduction, we use the obstruction to existence of regular maps, i.e. asymptotic dimension, which was  first defined by Gromov \cite{gromov} (see \cite{a-dimension} for a nice survey), Dirichlet harmonic functions \cite{harmonic}, growth and separation \cite{separation}, to deduce results on non-existence of translation-like actions. Conversely, we may deduce results on the existence of regular maps between two groups. The above process would generate many (counter)examples, we just list a few of them here and we refer the readers to the above papers for definitions of the above invariants.

\begin{cor}
If $G$ is a finitely generated non-amenable group, then $F_2\rightarrow_{reg} G$, where $F_2$ is the non-abelian free group on two generators. 
\end{cor}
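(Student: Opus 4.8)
The plan is to deduce this corollary directly from Theorem \ref{main theorem}, reducing everything to the task of producing a translation-like action of $F_2$ on $G$. Indeed, the Main Theorem asserts that whenever a finitely generated group $H$ admits a translation-like action on $G$ one obtains $H \rightarrow_{reg} G$. Taking $H = F_2$, which is finitely generated, it therefore suffices to exhibit a translation-like action $F_2 \overset{*}{\curvearrowright} G$.

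To supply such an action I would invoke Whyte's solution to the geometric von Neumann conjecture \cite{whyte}: a finitely generated group $G$ is non-amenable if and only if $F_2$ admits a translation-like action on $G$. Since $G$ is assumed non-amenable, the relevant ``only if'' direction yields the desired action $F_2 \overset{*}{\curvearrowright} G$. Combining this with Theorem \ref{main theorem} then gives $F_2 \rightarrow_{reg} G$, which is exactly the assertion of the corollary.

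The mathematical content is thus entirely concentrated in Whyte's theorem, whose proof is the genuine obstacle rather than the reduction itself. That proof rests on the characterization of non-amenability through the failure of the F\o lner condition, together with a Hall-type matching argument that partitions $G$ into pieces on which a free action of $F_2$ can be assembled with uniformly bounded displacement. Once this input is granted, the only routine verification is that the action produced in \cite{whyte} does satisfy both conditions of Definition \ref{def of translation-like actions}, namely freeness and the boundedness of $\{d_G(x, h \ast x) \mid x \in G\}$ for each $h \in F_2$; I would not expect any difficulty here beyond unwinding Whyte's construction. Accordingly, the write-up can be kept to a single line citing \cite{whyte} and applying Theorem \ref{main theorem}.
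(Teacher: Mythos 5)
Your proposal is correct and follows exactly the paper's route: cite Whyte's theorem that non-amenability of $G$ is equivalent to $F_2$ admitting a translation-like action on $G$, then apply Theorem \ref{main theorem}. The paper compresses this into a one-line citation of \cite{whyte}, so your write-up is just a more explicit version of the same argument.
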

\begin{proof}
This is clear by Whyte's solution to the geometric von Neumann conjecture, see \cite{whyte}.
\end{proof}

\begin{cor}\label{app. of seward's thrm}
If $G$ is a finitely generated infinite group, then $\mathbb{Z}\rightarrow_{reg} G$. 
\end{cor}
\begin{proof}
This is clear by Seward's solution to the geometric Burnside's problem, see \cite{seward}.
\end{proof}

\begin{cor}\label{settle cohen's question}
$\mathbb{Z}^d$ does not admit translate-like actions on $F_2$ for all $d\geq 2$. 
\end{cor}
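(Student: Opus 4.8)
The plan is to argue by contradiction, reducing the statement to a comparison of \emph{separation profiles} in the sense of \cite{separation}. Suppose that for some $d\geq 2$ the group $\mathbb{Z}^d$ admitted a translation-like action on $F_2$. Then Theorem \ref{main theorem} would immediately produce a regular map $\mathbb{Z}^d\rightarrow_{reg}F_2$, so it suffices to rule out the existence of such a map for $d\geq 2$. Among the four obstructions listed in the introduction, growth is useless here (polynomial growth maps happily into exponential growth), and I would instead use separation, which is exactly the invariant \cite{separation} was built to detect this phenomenon with.

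The one external input I would invoke is the monotonicity of the separation profile under regular maps from \cite{separation}: if $Z\rightarrow_{reg}X$ then $\mathrm{sep}_Z\preceq\mathrm{sep}_X$ up to the usual multiplicative constants. Applying this with $Z=\mathbb{Z}^d$ and $X=F_2$ reduces the problem to two computations, both available in \cite{separation}. First, the Cayley graph of $F_2$ with respect to its standard generators is the $4$-regular tree, and trees have bounded separation profile (any finite induced subforest on $m$ vertices can be split into pieces of size at most $m/2$ by deleting a single centroid), so $\mathrm{sep}_{F_2}(n)=O(1)$; since separation is a quasi-isometry invariant this is independent of the chosen generating set. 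Second, $\mathrm{sep}_{\mathbb{Z}^d}(n)\asymp n^{(d-1)/d}$, which is unbounded precisely when $d\geq 2$.

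Combining these, a regular map $\mathbb{Z}^d\rightarrow_{reg}F_2$ would force $n^{(d-1)/d}\preceq\mathrm{sep}_{F_2}(n)=O(1)$, which is absurd for $d\geq 2$; hence no translation-like action of $\mathbb{Z}^d$ on $F_2$ can exist. I expect the only point requiring care to be the bookkeeping of the direction of the monotonicity inequality --- one must confirm that a regular map pushes the profile of the \emph{source} below that of the target, not the reverse --- together with the correct citation of the two profile estimates. No genuinely new estimate is needed: the entire content lies in feeding the main theorem into the off-the-shelf machinery of \cite{separation}.
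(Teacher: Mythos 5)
Your proposal is correct and follows essentially the same route as the paper: reduce via Theorem \ref{main theorem} to the nonexistence of a regular map $\mathbb{Z}^d\rightarrow_{reg}F_2$, then invoke the monotonicity of the separation profile under regular maps from \cite{separation} together with the boundedness of $\mathrm{sep}_{F_2}$ and the unboundedness of $\mathrm{sep}_{\mathbb{Z}^d}$ for $d\geq 2$. The only cosmetic difference is that you quote the sharper estimate $\mathrm{sep}_{\mathbb{Z}^d}(n)\asymp n^{(d-1)/d}$, whereas the paper only needs (and cites) the fact that this profile tends to infinity.
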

\begin{proof}
By theorem \ref{main theorem}, we just need to show $\mathbb{Z}^d\not\rightarrow_{reg} F_2$ if $d\geq 2$. By \cite[Corollary 3.3]{separation}, $\lim_{n\to \infty}sep_{\mathbb{Z}^d}(n)=\infty$ if $d\geq 2$, while $sep_{F_2}(\cdot)$ is bounded by \cite[Section 2]{separation}. Since the separation function is monotone non-decreasing with respect to regular maps by \cite[Lemma 1.3]{separation}, we deduce $\mathbb{Z}^d\not\rightarrow_{reg}F_2$. 
\end{proof}
Corollary \ref{settle cohen's question} answers a question of Cohen, see \cite[p.4]{cohen}. This question is one initial motivation for us to look at translation-like actions.

\begin{cor}\label{settle jeandel's question}
The Baumslag-Solitar group $BS(m, n):=\langle s, t|~st^ms^{-1}=t^n\rangle$ does not admit translate-like actions on the lamplighter group $(\mathbb{Z}/2\mathbb{Z})\wr\mathbb{Z}$ for any integers $m, n$. 
\end{cor}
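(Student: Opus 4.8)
The plan is to follow exactly the same template as the previous two corollaries: by Theorem~\ref{main theorem}, if $BS(m,n)$ admitted a translation-like action on the lamplighter group $L:=(\Z/2\Z)\wr\Z$, then we would have $BS(m,n)\rightarrow_{reg} L$, so it suffices to rule out the existence of such a regular map. As in Corollary~\ref{settle cohen's question}, the natural invariant to exploit is one that is monotone under regular maps. The separation function worked against $F_2$ because $F_2$ has bounded separation while $\Z^d$ ($d\ge 2$) does not; here the roles are reversed, so I would look for an invariant on which $BS(m,n)$ is large and on which the lamplighter group is small, and which decreases (or is monotone in the appropriate direction) along regular maps.

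First I would identify the correct obstruction. The lamplighter group $L$ has exponential growth but, crucially, it is amenable with linear (or at worst controlled) separation, and its asymptotic dimension is known to be infinite, so asymptotic dimension is unlikely to separate it cleanly. A more promising route is growth together with the structure of $BS(m,n)$: for $|m|\neq|n|$ the Baumslag--Solitar group is non-amenable (it contains $F_2$), so one could instead invoke the growth/separation machinery of \cite{separation} or simply note that $F_2\hookrightarrow BS(m,n)$ forces any target of a regular map from $BS(m,n)$ to absorb $F_2$ as well, which conflicts with the bounded separation / amenability profile of $L$. For the remaining cases $|m|=|n|$ (where $BS(m,n)$ is solvable, hence amenable) I would fall back on the separation function or growth: the lamplighter group has separation function $sep_L(n)$ of a specific order that is strictly smaller than $sep_{BS(m,n)}(n)$, and since $sep$ is monotone non-decreasing under regular maps by \cite[Lemma 1.3]{separation}, a regular map $BS(m,n)\to L$ would be impossible.

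The key steps, in order, are: (1) reduce to non-existence of a regular map via Theorem~\ref{main theorem}; (2) split into the non-amenable case $|m|\neq|n|$ and the amenable cases $|m|=|n|$, since no single elementary invariant may handle both uniformly; (3) in each case locate the cited estimate for the relevant invariant of $BS(m,n)$ and of $L$ from \cite{separation} (or from the Dirichlet harmonic function / growth obstructions listed in the introduction), and (4) invoke the monotonicity of that invariant under regular maps to derive the contradiction.

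The main obstacle I expect is step (3): finding a \emph{single} obstruction, or a clean combination of obstructions from the four available (asymptotic dimension, Dirichlet harmonic functions, growth, separation), whose values for $BS(m,n)$ and for the lamplighter group are both known in the literature and point in the right direction for \emph{every} pair $(m,n)$ simultaneously. The solvable Baumslag--Solitar groups $BS(1,n)$ are metabelian and quasi-isometrically quite close to lamplighter-type and horocyclic products, so separating them from $(\Z/2\Z)\wr\Z$ by a coarse invariant is the delicate part; I would expect the cleanest argument to come from comparing separation functions, relying on \cite{separation} for the lamplighter bound and on the known lower bound for the separation profile of solvable Baumslag--Solitar groups, with the non-amenable case disposed of immediately by the inclusion of $F_2$.
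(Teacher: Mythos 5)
Your opening reduction via Theorem~\ref{main theorem} matches the paper, but everything after it rests on two factual errors and one conceptual one. Factually: (a) the asymptotic dimension of $(\Z/2\Z)\wr\Z$ is \emph{not} infinite, it equals $1$ by Gentimis \cite{gentimis} (you are likely thinking of $\Z\wr\Z$, which contains $\Z^n$ for every $n$); by dismissing asymptotic dimension you discard exactly the invariant the paper's proof is built on. (b) Your case split misclassifies the Baumslag--Solitar groups: for $mn\neq 0$, $BS(m,n)$ is solvable (hence amenable) if and only if $|m|=1$ or $|n|=1$, and contains $F_2$ if and only if $|m|,|n|\geq 2$; this has nothing to do with whether $|m|=|n|$. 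Thus $BS(1,2)$ falls into your ``non-amenable'' case but has no free subgroup, while $BS(2,2)$ falls into your ``solvable'' case but is non-amenable, so each half of your split feeds groups into an argument that does not apply to them.

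The deeper gap is the proposed obstruction itself. Even when $F_2\leq BS(m,n)$ does hold, composing inclusions only shows that a regular map $BS(m,n)\rightarrow_{reg}(\Z/2\Z)\wr\Z$ would yield $F_2\rightarrow_{reg}(\Z/2\Z)\wr\Z$, and that is not a contradiction: amenability is not monotone under regular maps, and separation monotonicity points the wrong way (the separation of $F_2$ is \emph{bounded}, so $sep_{F_2}\lesssim sep_{L}$ is no obstruction at all). In fact $F_2\rightarrow_{reg}(\Z/2\Z)\wr\Z$ is true: the lamplighter contains a free subsemigroup on two generators (e.g.\ $t$ and $at$, where $a$ is the lamp generator and $t$ the shift), giving a Lipschitz injective --- hence regular --- copy of the rooted binary tree, into which the tree of $F_2$ maps regularly. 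This is precisely why the paper, in its case $mn=0$ where $BS(m,n)$ is a free product containing $F_2$, abandons regular maps entirely and argues at the level of translation-like actions: restrict the action to $F_2$ \cite[Lemma 1.3]{jeandel} and invoke Whyte's theorem \cite[Theorem 6.2]{whyte} that $F_2$ admits a translation-like action only on non-amenable groups. Your amenable case is also unsupported: the needed inequality between the separation profiles of $BS(1,n)$ and of the lamplighter is computed nowhere in \cite{separation} or the rest of the paper's bibliography. The paper's actual argument for all $mn\neq 0$ is uniform and avoids amenability altogether: $asdim((\Z/2\Z)\wr\Z)=1$ \cite{gentimis}, asymptotic dimension is monotone non-decreasing under regular maps \cite[Theorem 29]{a-dimension}, and $asdim(BS(m,n))\geq 2$ since $BS(m,n)$ is infinite, torsion-free \cite{kms} and not free, hence not virtually free by Stallings \cite{stallings}, while finitely presented groups of asymptotic dimension one are virtually free \cite{fuji_whyte}.
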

\begin{proof}
Case 1: Assume $mn\neq 0$.

It suffices to show $BS(m, n)\not\rightarrow_{reg}(\mathbb{Z}/2\mathbb{Z})\wr\mathbb{Z}$ if $mn\neq 0$.

First, $asdim((\mathbb{Z}/2\mathbb{Z})\wr\mathbb{Z}))=1$ by \cite[Proposition on p.5]{gentimis}. Then note that the asymptotic dimension is monotone non-decreasing under regular maps, see \cite[Section 6]{separation} or just apply \cite[Theorem 29]{a-dimension} in our setting. Therefore, we are left to show $asdim(BS(m, n))\geq 2$ if $mn\neq 0$.

First, $BS(m, n)$ is always an infinite group, hence $asdim(BS(m, n))>0$ by \cite[Lemma 1]{gentimis} or Corollary \ref{app. of seward's thrm}. We are left to show $asdim(BS(m, n))\neq 1$. By \cite[Corollary 1.2]{fuji_whyte} (see also \cite{cohomo. approach, gentimis, filling invariant}), we just need to check $BS(m, n)$ does not contain free group as a subgroup of finite index. This is clear since $BS(m, n)$ is torsion-free when $mn\neq 0$ by \cite{kms} and any torsion-free virtually free groups are free groups by Stallings' work \cite{stallings}.

Case 2: Assume $mn=0$.

$BS(m, n)\cong \mathbb{Z}*(\mathbb{Z}/n\mathbb{Z})$ or $\mathbb{Z}*(\mathbb{Z}/m\mathbb{Z})$. This group contains a free group, then it does not admit a translation-like action on $(\mathbb{Z}/2\mathbb{Z})\wr \mathbb{Z}$ by \cite[Lemma 1.3]{jeandel} and \cite[Theorem 6.2]{whyte}. 
\end{proof}
\begin{remark}
Note that to prove the above corollary, we can directly focus on $BS(1, n)$, which is amenable. Also note that if $m=n=1$, $BS(1, 1)=\mathbb{Z}^2$, then we can also deduce $\mathbb{Z}^2\not\rightarrow_{reg} (\mathbb{Z}/2\mathbb{Z})\wr\mathbb{Z}$ by \cite[Proposition 6.1]{separation}. If $m=0$ or $n=0$, then $BS(m, n)\cong \mathbb{Z}*(\mathbb{Z}/n\mathbb{Z})$ or $\mathbb{Z}*(\mathbb{Z}/m\mathbb{Z})$, this group has asymptotic dimension one by \cite[Theorem 84]{a-dimension}.
\end{remark}
Corollary \ref{settle jeandel's question} answers \cite[Conjecture 3]{jeandel} negatively. It also answers the  geometric Gersten problem stated in \cite{seward} in the negative. The original geometric Gersten problem (stated under a further assumption compared with the one stated in \cite{seward}) was asked by Whyte in \cite[p.107]{whyte}. See \cite{cohen} for more discussion on this conjecture. Note that the lamplighter group is a potential counterexample to the above conjectures was already suggested in \cite{jeandel}. 

We end the paper with the following question:
\begin{question}
Does \cite[Theorem 2.3]{hs} still hold for regular maps (maybe up to some modification of the definition of fat bigons there)?
\end{question}

\textbf{Acknowledgments.} The author would like to thank his advisor  Prof. Hanfeng Li for constant support and illuminating discussion on this paper. He is also grateful to Nhan-Phu Chung and Xin Li for sharing their ideas and unpublished notes related to continuous orbit equivalence theory. He also thanks David Bruce Cohen for very helpful correspondence.

\begin{bibdiv}
\begin{biblist}
\bib{a-dimension}{article}{
   author={Bell, Gregory C.},
   author={Dranishnikov, Alexander N.},
   title={Asymptotic dimension},
   journal={Topology Appl.},
   volume={155},
   date={2008},
   number={12},
   pages={1265--1296},
}   
   
\bib{harmonic}{article}{
   author={Benjamini, Itai},
   author={Schramm, Oded},
   title={Harmonic functions on planar and almost planar graphs and
   manifolds, via circle packings},
   journal={Invent. Math.},
   volume={126},
   date={1996},
   number={3},
   pages={565--587},
   }

\bib{separation}{article}{
   author={Benjamini, Itai},
   author={Schramm, Oded},
   author={Tim\'ar, \'Ad\'am},
   title={On the separation profile of infinite graphs},
   journal={Groups Geom. Dyn.},
   volume={6},
   date={2012},
   number={4},
   pages={639--658},
   }
\bib{cohen_adv.}{article}{
   author={Cohen, David B.},
   title={The large scale geometry of strongly aperiodic subshifts of finite
   type},
   journal={Adv. Math.},
   volume={308},
   date={2017},
   pages={599--626},
   }

\bib{cohen}{article}{
   author={Cohen, David B.},
   title={A counterexample to the easy direction of the geometric Gersten conjecture},
   status={arXiv: 1612.03491},
   }
 \bib{cp}{article}{
   author={Carroll, David},
   author={Penland, Andrew},
   title={Periodic points on shifts of finite type and commensurability
   invariants of groups},
   journal={New York J. Math.},
   volume={21},
   date={2015},
   pages={811--822},
   }  
\bib{cohomo. approach}{article}{
   author={Dranishnikov, Alexander N.},
   title={Cohomological approach to asymptotic dimension},
   journal={Geom. Dedicata},
   volume={141},
   date={2009},
   pages={59--86},}   
   
\bib{fuji_whyte}{article}{
   author={Fujiwara, Koji},
   author={Whyte, Kevin},
   title={A note on spaces of asymptotic dimension one},
   journal={Algebr. Geom. Topol.},
   volume={7},
   date={2007},
   pages={1063--1070},
   }   
   
\bib{gentimis}{article}{
   author={Gentimis, Thanos},
   title={Asymptotic dimension of finitely presented groups},
   journal={Proc. Amer. Math. Soc.},
   volume={136},
   date={2008},
   number={12},
   pages={4103--4110},
   }
\bib{gromov}{article}{
   author={Gromov, M.},
   title={Asymptotic invariants of infinite groups},
   conference={
      title={Geometric group theory, Vol.\ 2},
      address={Sussex},
      date={1991},
   },
   book={
      series={London Math. Soc. Lecture Note Ser.},
      volume={182},
      publisher={Cambridge Univ. Press, Cambridge},
   },
   date={1993},
   pages={1--295},
   }   
 
 \bib{hs}{article}{
author={Hume, David},
author={Sisto, Alessandro},
title={Groups with no coarse embeddings into hyperbolic groups},
status={arXiv: 1702:03789} 
 }  
   
\bib{filling invariant}{article}{
   author={Januszkiewicz, Tadeusz},
   author={\'Swi tkowski, Jacek},
   title={Filling invariants of systolic complexes and groups},
   journal={Geom. Topol.},
   volume={11},
   date={2007},
   pages={727--758},
   }

\bib{jeandel}{article}{
author={Jeandel, Emmanuel},
title={Translation-like actions and aperiodic subshifts on groups},
status={arXiv: 1508.06419}
}

\bib{kms}{article}{
   author={Karrass, A.},
   author={Magnus, W.},
   author={Solitar, D.},
   title={Elements of finite order in groups with a single defining
   relation. },
   journal={Comm. Pure Appl. Math.},
   volume={13},
   date={1960},
   pages={57--66},}

\bib{lixin}{article}{
title={Dynamic characterizations of quasi-isometry, and applications to cohomology},
author={Li, Xin},
status={arXiv: 1604.07375v3},
}

\bib{kra}{article}{
title={Cantor systems and quasi-isometry of groups},
author={Medynets, Kostya},
author={Sauer, Roman},
author={Thom, Andreas},
status={arXiv: 1508.07578},
}

\bib{seward}{article}{
   author={Seward, Brandon},
   title={Burnside's Problem, spanning trees and tilings},
   journal={Geom. Topol.},
   volume={18},
   date={2014},
   number={1},
   pages={179--210},
   }
\bib{stallings}{article}{
   author={Stallings, John R.},
   title={On torsion-free groups with infinitely many ends},
   journal={Ann. of Math. (2)},
   volume={88},
   date={1968},
   pages={312--334},
   }

\bib{whyte}{article}{
   author={Whyte, Kevin},
   title={Amenability, bi-Lipschitz equivalence, and the von Neumann
   conjecture},
   journal={Duke Math. J.},
   volume={99},
   date={1999},
   number={1},
   pages={93--112},
   issn={0012-7094},
   }

\end{biblist}
\end{bibdiv}

\end{document}